\newcommand{\K}{\mathbb{K}}
\newcommand{\F}{\mathbb{F}}
\newtheorem{teo}[equation]{Theorem}
\newtheorem{problem}[equation]{Problem}
\newtheorem{definition}[equation]{Definition}
\newtheorem{prop}[equation]{Proposition}
\theoremstyle{definition}
\newtheorem{ex}[equation]{Example}
\newtheoremstyle{dico}
 {\baselineskip}   
  {\topsep}   
  {}  
  {0pt}       
  {} 
  {.}         
  {5pt plus 1pt minus 1pt} 
  {}          
\theoremstyle{dico}
\newtheorem{say}[equation]{}
\numberwithin{equation}{section}
\newcommand{\ra}{\rightarrow}
\newcommand{\Zeta}{{\mathbb{Z}}}
\newcommand{\N}{{\mathbb{N}}}
\newcommand{\meno}{^{-1}}
\newcommand{\alfa}{\alpha}
\newcommand{\alf}{\alpha}
\newcommand{\Aut}{\operatorname{Aut}}
\newcommand{\ord}{{\operatorname{ord}}}
\renewcommand{\setminus}{-}
\renewcommand{\phi}{\varphi}
\newcommand{\lds}{\dotsc}
\newcommand{\cds}{\cdots}
\newcommand{\cd}{\cdot}
\newcommand{\sx}{\langle}
\newcommand{\xs}{\rangle}
\newcommand{\lra}{\longrightarrow}
\newcommand{\ga}{\gamma}
\newcommand{\Ga}{\Gamma}
\newcommand{\id}{\operatorname{id}}
\newcommand{\PP}{\mathbb{P}}   
\renewcommand{\phi}             {\varphi}
\newcommand{\HH}{\mathbf{H}}
\newcommand{\Datum}{\theta}
\newcommand{\Z}{\mathbb{Z}}
\newcommand{\zio}{\zopfr\times \Aut G}
\newcommand{\zopf}{\operatorname{B}}
\newcommand{\zopfr}{\operatorname{B}_r}
\newcommand{\quozient}[2]       {\raisebox{.3EM}{$#1$} \hspace {-.15ex} 
                                  \Big / \hspace{-.15ex}
                                  \raisebox{-.4EM}{$#2$}}
\newcommand{\mm}{{\mathbf{m}}}
\newcommand{\HM}{\HH_\mm}
\newcommand{\mihi}[1]{}
\newcommand{\magma}{{\texttt{MAGMA}}\xspace}
\newcommand{\auta}{\Aut^* \Ga_r}
\newcommand{\aug}{\Aut G}
\newcommand{\Out}{\operatorname{Out}}
\newcommand{\outa}{\Out^*\Ga_r}
\newcommand{\braid}{\zopfr} 
\newcommand{\data}{\mathscr{D}}
\newcommand{\datar}{\mathscr{D}^r}
\newcommand{\datarg}{\mathscr{D}^r_g}
\newcommand{\codim}{\operatorname{codim}}
\newcommand{\ts}{\tilde{\sigma}}
\newcommand{\xgm}{\data_{G,\mm}}
\newcommand{\pro}{\rho}
\newcommand{\siggr}{\mathfrak{S}_{d,g_{\max}}}
\newcommand{\abs}[1]{\left\vert#1\right\vert}
\newcommand{\info}{{\HH}}
\newcommand{\inn}{\operatorname{inn}}
\newcommand{\Inn}{\operatorname{Inn}}
\begin{document}

\author{Diego Conti, Alessandro Ghigi, Roberto Pignatelli} \title
[Topological types] {Topological types of actions
  on curves}

\address{Universit\`{a} di Pisa}

\email{diego.conti@unipi.it}

\address{Universit\`{a} di Pavia}

\email{alessandro.ghigi@unipv.it}

\address{Universit\`{a} di Trento}

\email{Roberto.Pignatelli@unitn.it}

\subjclass[2020]{Primary:   
14H37, 
  Secondary:
 14Q05, 
  57M60, 
   14H15, 
  14J10. 
}

\thanks{ The authors were partially supported by INdAM (GNSAGA).  The
  second author was partially supported also by MIUR PRIN 2017
  ``Moduli and Lie Theory'' , by FAR 2016 (Pavia) ``Variet\`a
  algebriche, calcolo algebrico, grafi orientati e topologici'', by
  MIUR, Programma Dipartimenti di Eccellenza (2018-2022) -
  Dipartimento di Matematica ``F. Casorati'', Universit\`a degli Studi
  di Pavia. The third author was partially supported also by MIUR PRIN
  2015 ``Geometry of Algebraic Varieties'' and by MIUR PRIN 2017
  ``Moduli Theory and Birational Classification''.}

\begin{abstract}
  We describe an algorithm that constructs a list of all topological
  types of holomorphic actions of a finite group on a compact Riemann
  surface $C$ of genus $g \geq 2$ with $C/G \cong {\mathbb P}^1$.
\end{abstract}

\maketitle



\section{Introduction}

Galois covers of the projective line often give interesting examples
of algebraic curves of genus $g\geq 2$.  Any such cover is a compact
Riemann surface $C$, endowed with an action of a finite group $G$ such
that $C /G \cong \PP^1$. Studying compact Riemann surfaces with a
$G$-action subject to the latter condition and Galois covers of the
line is equivalent.

To any $G$-action on a Riemann surface one can attach a topological
invariant, the \emph{topological type} (see Section \ref{sec:fam} for
the precise definitions).  It turns out that for every fixed
topological type there is a sort of universal family containing all
the covers with the given topological type. These families are often
very interesting and represent interesting loci in the moduli space of
curves.

Moreover it follows from the existence and the properties of these
families that two Riemann surfaces with an action of the same group
are deformation equivalent (through Riemann surfaces with an action)
if and only if they have the same topological type (see \S \ref{fami} for
more details on this point).  Thus, if one starts from a collection of
Riemann surfaces and constructs new algebraic varieties out of them,
the topological type controls the deformation equivalence class of the
varieties one obtains.

For these reasons it is very useful to have a list of all the
possibile topological types at least for reasonably small genus. This
is exactly the problem we address: for a positive integer $g \geq 2$,
describe explicitly the set of topological types of (faithful) actions
of a finite group on a Riemann surface of genus $g$ with quotient
isomorphic to ${\mathbb P}^1$. To our knowledge a complete answer is
known only for very small values of $g$. The papers
\cite{kurikuri,kuribayashi-akikazu-kimura} give a complete
classification of the topological types of group actions on Riemann
surfaces of genus $g \leq 5$ without any assumption on the quotient
$C/G$.  For higher genus, we do not know of any classification result
of this type. An important contribution in this direction is the
systematic work of Paulhus \cite{paulhus}. She introduces a very
efficient algorithm that yields a database of pairs $(C,G)$ where $C$
is a Riemann surface of genus among $2$ and $15$ and $G$ is a finite
group acting on it with quotient $C/G$ of genus zero. Her database
contains representatives for all possible topological types of groups
actions in this range, but does not give a complete answer to the
question (up to genus $15$) since it does not indicate when two data
in the database share the same topological type. For instance,
Paulhus' database contains $174121$ data for $g=15$, which according
to our computations correspond to $768$ topological types.  (Added in
proof: another important work that we discovered at a very late stage,
is \cite{AAA}. We mention also the paper \cite{BBB} where the
topological classification is useful in decomposing Jacobians with
group action.)


Fix $g\geq 2$, a finite group $G$ and $r\geq 3$.  It is well known
that
the topological types of holomorphic $G$-actions on a genus $g$
surface with $r$ branch points correspond bijectively to the points of
the quotient $\datarg(G) / \info$ where $\datarg(G)$ is a finite (but
possibly huge) set and $\info$ is an infinite group acting on it (this
fundamental fact is recalled with some details in Section
\ref{sec:fam}; see also \eqref{def-HH} for the definition of
$\info$). Therefore, the object of this paper is the description of
the quotient $\datarg(G)/\info$ for fixed $g$ and $G$.  Algorithms for
this kind of computation are already known and have been used in
several papers, like for example
\cite{BCGP,fgp,ContiGhigiPignatelli:SomeEvidence,gleissner}, just to
quote the ones closer to our approach.  As the genus $g$ increases,
the set $\datarg(G)$ becomes quite large, and finding an economical
way of performing the computation becomes essential. In the present
paper we describe an efficient algorithm that computes
$\datarg(G)/\info$ for given $g$ and $G$. The computation of
$\datarg(G)$ is based on our work
\cite{ContiGhigiPignatelli:SomeEvidence}, and uses some of the ideas
of \cite{paulhus}; the identification of the quotient is new.

We implemented the algorithm using \magma\cite{MA}; our implementation
is available at \cite{gullinbursti}. Running the code over several
months on a computer with 56 Intel Xeon 2.60GHz CPU and 128 GB of RAM
we have been able to compute $\data_g(G)/\info$ for all groups $G$ and
$g\leq 39$, with the exception of only three cases.  These exceptions
share the same group, $G= (\Z_3\times\Z_3)\rtimes\Z _2$,
and are respectively in genus $g=28,34,37$.  See Table
\ref{table:numberofssgbyg} for an account of how many topological
types exist for each genus. See \S \ref{say:results} for some
perspectives on future work related to the group $G$ above and similar
ones.

The results of our computations are collected in a database, available at
\begin{quote}
  \href{https://mate.unipv.it/ghigi/tipitopo}{\url{https://mate.unipv.it/ghigi/tipitopo}}.
\end{quote}
The database also contains several other data classifying actions on surfaces of
higher genus (up to $g=100$) under some further constraints. We refer
to the website for the exact indication of these constraints.


The paper is organized as follows: in \S \ref{sec:fam} we recall the
theoretical background, introducing the precise definitions of
$\datarg(G)$, $\info$; in \S \ref {sec:algoritmo} we describe the
algorithm, and give some details on its implementation.

\smallskip {\bfseries \noindent{Acknowledgements.} }
The authors wish to thank Matteo Penegini, Fabio Perroni and Michael
Loenne for interesting discussions, and Fabio Gennai for crucial
technical help in the preparation of the website.

\section{Topological types and families of
  \texorpdfstring{$G$}{G}-curves}
\label{sec:fam}

The goal of this section is to recall without proofs the mathematics
behind our computation.

A $G$-\emph{curve} is a smooth projective curve $C$ over the complex
numbers together with an effective algebraic action of the group $G$.
We always assume that the genus of $C$ is at least $ 2$ and hence that
$G$ is finite.  We also assume throughout the paper that the quotient
$C/G$ is isomorphic to the projective line $ \PP^1$, i.e. the
projection $C \ra C/G$ is a Galois cover of $\PP^1$.  Under this
assumption it is completely equivalent to study $G$-curves or Galois
covers of the line.

\begin{definition}
  If $C$ and $C'$ are two $G$-curves we say that they are
  \emph{topologically equivalent} or that they have the same
  \emph{topological type} if there exist $\eta \in \Aut G$ and an
  orientation preserving homeomorphism $f: C \ra C'$ such that
  $f(g\cd x) = \eta (g) \cd f(x)$ for $x\in C$ and $g\in G$. We say
  that $C$ and $C'$ are $G$-\emph{isomorphic} if moreover $f$ is a
  biholomorphism.
\end{definition}
These concepts are sometimes called \emph{unmarked} topological type
and isomorphisms, but we will drop the `unmarked' since we do not need
to consider their marked counterparts.

For $r\geq 3$ let $\Ga_r $ denote the group
\begin{gather*}
  \Ga_r = \sx \ga_1, \lds, \ga_r\, | \, \ga_1 \cds \ga_r\xs.
\end{gather*}

\begin{definition}
  \label{def:data}
  If $G$ is a finite group an $r$-datum is an epimorphism
  $\theta : \Ga_r \ra G$ is such that $\theta(\ga_i)\neq 1$ for
  $i=1, \lds, r$.
\end{definition}
The \emph{signature} of $\theta$, denoted $\mm(\theta)$ or simply
$\mm$, is the vector
\begin{gather*}
  \mm=(m_1,\dotsc, m_r)
\end{gather*}
where $m_i : = \ord\, \theta(\ga_i)$.  The \emph{genus} of $\theta$,
denoted by $g(\theta)$, is defined by the Riemann-Hurwitz formula:
\begin{gather}
  \label{RH}
  2(g(\theta)-1)=|G|\left(-2+\sum_{i=1}^{r}\left(1-\frac{1}{m_{i}}\right)\right)
\end{gather}
By covering theory if one chooses $r$ distinct points $x_1, \lds, x_r$
in $\PP^1$, a base point $x_0 \in \PP^1 \setminus \{x_1, \lds, x_r\}$
and an isomorphism among $\Ga_r$ and
$\pi_1(\PP^1 \setminus \{x_1, \lds, x_r\}, x_0)$, then a datum
corresponds to a topological Galois covering of
$C^* \ra \PP^1 \setminus \{x_1, \lds, x_r\} $ with structure group
$G$. By Riemann's Existence Theorem (see e.g.  \cite[ch. III \S
4]{M95} or \cite{debes}) this compactifies to a $G$-covering
$C \ra \PP^1$, where the genus of $C$ is $g(\theta)$.

We let $\datarg(G)$ denote the set of all $r$-data of genus $g$
associated with the group $G$.

\begin{definition} \label{sayautstar} Denote by
  $\auta \subset \Aut \Ga_r$ the subgroup of automorphisms $\nu$
  satisfying:
  \begin{enumerate}
  \item for $i=1, \lds, n$ the element $\nu (\ga_i)$ is conjugate to
    $\ga_j$ for some $j$;
  \item the automorphism of $H^2(\Ga_r,\Zeta)$ induced by $\nu$ is the
    identity.
  \end{enumerate}
\end{definition}
The second condition means that $\nu$ is orientation-preserving: if we
identify $\Ga_r $ with $ \pi_1(\PP^1-\{x_1, \lds, x_r\})$
appropriately (using a so-called geometric basis) then $\nu$ is
represented by an orientation preserving self-homeomorphism of
$\PP^1 \setminus \{x_1, \lds, x_r\}$ (Dehn-Nielsen-Baer theorem, see
e.g. \cite{fmarga,ivanov,zieschang}).

\begin{say}
  \label{auta-azione}
  The group $\Aut^*\Gamma_r \times \aug $ acts on the set
  $\datarg(G) $ by the rule
  \begin{gather*}
    (\nu, \eta) \cd \theta : = \eta \circ \theta \circ \nu\meno,
  \end{gather*}
  where $(\nu, \eta) \in \Aut^*\Gamma_r\times \Aut G$ and
  $\theta\in \datarg(G)$ is a datum.  Moreover
  $\Inn \Ga_r \subset \Aut^* \Ga_r$ and we set
  \begin{gather*}
    \outa: = \frac{\auta}{\operatorname{Inn} \Ga_r}.
  \end{gather*}
  This group has a presentation with generators
  $\sigma_1, \lds, \sigma_{r-1} $ and relations
  \begin{gather}
    \label{umpf1}
    \sigma_i \sigma_j = \sigma_j \sigma_i \quad \text{for} \, \,
    |i-j|\geq 2, \qquad
    \sigma_{i+1}\sigma_i\sigma_{i+1}=\sigma_i\sigma_{i+1}\sigma_i, \\
    \label{umpf2} \sigma_1 \cds \sigma_{r-2} \sigma_{r-1}^2
    \sigma_{r-2} \cds \sigma_1 = 1,\qquad ( \sigma_1 \cds \sigma_{r-1}
    )^r =1.
  \end{gather}
  Instead the braid group $\zopf_r$ is the group generated by
  $\sigma_1, \lds, \sigma_{r-1}$ subject only to relations
  \eqref{umpf1}.  Define $\ts_i: \Ga_r \ra \Ga_r$ as follows:
  \begin{gather}
    \label{deftztz}
    \begin{gathered}
      \ts_i (\gamma_i) = \gamma_{i+1}, \quad \ts_i
      (\gamma_{i+1}) = \gamma_{i+1} ^{-1} \gamma_i \gamma_{i+1}, \\
      \ts_i (\gamma_j ) = \gamma_j \quad \text{for }j \neq i, i+1.
    \end{gathered}
  \end{gather}
  Then $\ts_i$ is an automorphism of $\Ga_r$ and it is called the
  \emph{$i$-th Hurwitz move}.  The Hurwitz moves $ \ts_1,$ $ \lds,$
  $ \ts_{r-1}\in\auta$ satisfy the relations \eqref{umpf1}. Thus there
  is a (unique) morphism $\phi : \zopf_r \ra \auta$ such that
  $\phi(\sigma_i) : = \ts_i$.  Using $\phi$ we let
  $\zopf_r \times \Aut G $ act on $\datarg$: if
  $(\eta,\sigma) \in\zopfr \times \Aut G$ and $\theta \in \datarg$,
  then
  \begin{gather*}
    (\eta,\sigma) \cd \theta := \eta \circ \theta \circ
    \phi(\sigma)\meno.
  \end{gather*}
  The composition
  \begin{gather*}
    \zopfr \stackrel{\phi}{\lra} \auta \lra \outa
  \end{gather*}
  maps $\tilde{\sigma}_i $ to $\sigma_i$, so is surjective. It follows
  that $\phi( \zopfr)\cd \operatorname{Inn} \Ga_r = \auta$.  For
  $a\in \Ga_r$ let $\inn_a : \Ga_r \ra \Ga_r$ be conjugation by $a$:
  $\inn_a(x) := axa\meno$. Then for $\theta \in \datarg$, we have
  \begin{gather*}
    \theta\circ \inn _a = \inn_{\theta(a)} \circ \theta.
  \end{gather*}
  Hence the actions of $ \auta \times\Aut G $ and of
  $\zopfr\times\Aut G $ on $\datarg$ have the same orbits and
  \begin{gather*}
    \datarg / ( \auta \times \Aut G ) = \datarg / ( \zopfr \times\Aut
    G).
  \end{gather*}
  The orbits of the $ \zopfr \times\aug $--action are called
  \emph{Hurwitz equivalence classes} and elements in the same orbit
  are said to be \emph{Hurwitz equivalent}.

\end{say}

\begin{teo}
  Fix $g \geq 2 $ and a finite group $G$, the topological types of
  $G$-curves $C$ with $g(C)=g$, $g(C/G)= 0$ and $r$ branch points are
  in bijection with the set
  \begin{gather}
    \label{set}
    \datarg(G) / \left( \zopfr \times\Aut G \right).
  \end{gather}
\end{teo}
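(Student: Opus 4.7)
The plan is to establish the bijection in two stages: first matching topological types with orbits of $\datarg(G)$ under $\auta \times \Aut G$, and then invoking the identification of these orbits with $\zopfr \times \Aut G$-orbits that the excerpt has already carried out at the end of \S\ref{auta-azione}.

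First I would fix, once and for all, an ordered set of $r$ points $x_1, \ldots, x_r \in \PP^1$, a basepoint $x_0$, and a geometric basis identifying $\pi_1(\PP^1 \setminus \{x_1, \ldots, x_r\}, x_0)$ with $\Ga_r$. Using this identification, Riemann's Existence Theorem (cited in the excerpt) produces from each $\theta \in \datarg(G)$ a $G$-curve $C_\theta$ whose branch locus is exactly $\{x_1, \ldots, x_r\}$ and whose monodromy is $\theta$. Two data $\theta, \theta'$ yield $G$-equivariantly biholomorphic covers \emph{with fixed branch structure} precisely when $\theta' = \eta \circ \theta$ for some $\eta \in \Aut G$. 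This is the rigid comparison in the background of everything that follows.

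Second, I would relax the rigid branch structure to get arbitrary $G$-curves. Given any $G$-curve $C \to \PP^1$ with $r$ branch points, I would compose with an orientation-preserving self-homeomorphism of $\PP^1$ sending its branch points onto $\{x_1, \ldots, x_r\}$: this is always possible because the ordered configuration space of $r$-tuples in $\PP^1$ is connected under orientation-preserving homeomorphisms. The residual ambiguity is then an orientation-preserving mapping class of the $r$-punctured sphere, together with an element of $\Aut G$. Applying the Dehn-Nielsen-Baer theorem (as recalled in the excerpt, with references \cite{fmarga,ivanov,zieschang}), I would identify this mapping class group with $\outa$, reading the orientation condition as the triviality of the induced action on $H^2(\Ga_r, \Z)$ demanded by condition (2) of Definition \ref{sayautstar}. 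Lifting mapping classes back from $\outa$ to $\auta$ introduces an $\Inn \Ga_r$-ambiguity, which is absorbed once one quotients by $\Aut G$ thanks to the identity $\theta \circ \inn_a = \inn_{\theta(a)} \circ \theta$ recorded in the excerpt. Hence topological types are in bijection with $\datarg(G)/(\auta \times \Aut G)$.

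Finally I would quote the identity $\phi(\zopfr) \cdot \Inn \Ga_r = \auta$, proved in the excerpt, to conclude
\[
\datarg(G)/(\auta \times \Aut G) = \datarg(G)/(\zopfr \times \Aut G),
\]
which is the statement. The main obstacle is the second step: one must verify carefully that Definition \ref{sayautstar}(2) is \emph{exactly} the algebraic translation of orientation preservation for a geometric basis, and that the lift of an orientation-preserving self-homeomorphism of the base actually lifts to a $G$-equivariant homeomorphism of covers, realizing the topological equivalence. Both points rest on Dehn-Nielsen-Baer and on the functoriality of the Riemann Existence Theorem under homeomorphisms of the base, so the technical substance of the proof is concentrated there.
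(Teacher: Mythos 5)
Your proposal is correct and follows essentially the same route as the paper's source: the paper gives no internal proof of this theorem, deferring entirely to \cite[Section 5]{gt}, and the argument there is the classical one you outline --- Riemann's Existence Theorem rigidifies the branch locus, the Dehn--Nielsen--Baer theorem identifies orientation-preserving mapping classes of the $r$-punctured sphere with $\outa$, the $\Inn \Ga_r$ ambiguity is absorbed into $\Aut G$ via $\theta \circ \inn_a = \inn_{\theta(a)} \circ \theta$, and the orbit identity $\datarg(G)/(\auta \times \Aut G) = \datarg(G)/(\zopfr \times \Aut G)$ from \S\ref{auta-azione} concludes. The two technical points you flag (that condition (2) of Definition \ref{sayautstar} is exactly the algebraic encoding of orientation preservation, and that homeomorphisms of the base lift to $G$-equivariant homeomorphisms of the covers) are indeed where the substance lies, and they are precisely what the cited reference verifies in detail.
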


A proof can be found for example in \cite[Section 5]{gt}.

\begin{say}\label{fami}
  It often happens that one is not interested in a precise $G$-curve,
  but rather in the whole family of $G$-curves of a given topological
  type.  Indeed, there is a sort of universal family containing all
  $G$-curves of a given topological type.  These ``universal''
  families have been widely studied and used in the literature for
  several purposes in the last decades, see
  e.g. \cite{BCGP,baffo-linceo,clp,clp2,moonen-special,fgp,fn,friedv,penegini2013surfaces,perroni,volk}.
  They were first constructed by Gonz\'alez-D\'{i}ez and Harvey
  \cite{gabino} using Teichm\"uller theory.  There are other ways to
  construct it.  Recently in \cite{gt} the second author and Tamborini
  gave a different construction of these families and corrected an
  inaccuracy in \cite{gabino}.  The precise statement is a bit long
  and there is no need to recall it here in full detail. At any rate,
  the end result is that for any topological type there is a family of
  $G$-curves whose fibres all have the given topological type, and
  which is universal in the following sense: every $G$-curve with the
  given topological type appears as a fibre of the family and it
  appears at most a finite number of times.  Moreover the base of this
  family is an \'etale cover of the $\mathsf{M}_{0,r}$, so in
  particular it is smooth and connected.  This ``universal'' family is
  not unique, but only unique up to the equivalence relation generate
  by finite \'etale pull-backs.  We refer to the Introduction in
  \cite{gt} for full details.

  An important consequence of this theorem is that topological types
  and deformation equivalence coincide for $G$-curves. Indeed if $C$
  and $C'$ are $G$-curves with the same topological type, then they
  both appear as fibres of a common universal family and therefore
  they are deformation equivalent (through $G$-curves). The converse
  is obvious.  This fact is very important in the applications to the
  construction of new deformation types of algebraic varieties as in
  \cite{BCGP,BP12,penegini2011surfaces,penegini2013surfaces,polizzi,
    carnovale polizzi,mistretta polizzi,FP,gleissner,BP16,polizzi
    pignatelli, pignatelli}.  It follows from this discussion that the
  classification of \emph{universal families} and of deformation
  equivalence classes of $G$-curves are both equivalent to the
  classification of topological types.  Hence again these problems
  boil down to studying the quotient in \eqref{set}.  This is a strong
  additional motivation --- in fact, our original motivation --- for
  the classification of topological types.
\end{say}

\section{The algorithm}
\label{sec:algoritmo}

We illustrate an algorithm to attack the following:
\begin{problem}
  \label{problem:classification}
  Given a number $g_{\max}$ list all the topological types of
  $G$-curves with $g(C) \leq g_{\max}$ and $C/G \cong \PP^1$.
\end{problem}
After fixing the genus $g$, the group $G$ and the number of branch
points $r$, this amounts to listing representatives of the quotient in
\eqref{set}.

\begin{say}
  \label{ignore_order}
  We use a refinement of the algorithm illustrated in
  \cite{ContiGhigiPignatelli:SomeEvidence}, which lists $r$-data
  forming counter-examples to the Coleman-Oort conjecture up to the
  action of $\Aut G$. This algorithm uses signature as an invariant
  for the classification, as was done in \cite{BCGP,fgp}.  A
  \emph{spherical system of generators} of the group $G$ is a list
  $(g_1, \lds, g_r)$ such that (1) $g_i\neq 1$ for any $i$, (2) $G$ is
  generated by $g_1, \lds, g_r$ and (3) $g_1\cds g_r = 1$.  Having
  fixed a finite group $G$, giving an $r$-datum $\theta$ is equivalent
  to giving a spherical systems of generators $(g_1, \lds, g_r)$ of
  $G$: simply define $\theta$ or $g_i$ by the relation
  $\theta(\ga_i)=g_i$.  From now on we will identify data and
  spherical systems of generators and we will write a datum in
  $\datarg(G)$ as $(g_1, \lds, g_r)$.  Signature defines a map
  \begin{equation}
    \label{eqn:signaturemap} 
    \datarg(G) \ra \N^r, \quad (g_1, \dots, g_r)\mapsto ( \ord(g_1), \lds, \ord(g_r)).
  \end{equation}
  With the interpretation of $\datarg(G)$ just described, we have
  $\datarg (G) \subset G^r$, and the action of $\zio$ on $\datarg(G)$
  described in \ref{auta-azione}
  extends to an action
  on $G^r$: $\Aut G$ acts componentwise,
  \begin{gather*}
    \eta\cdot (g_1,\dotsc, g_r)=(\eta(g_1),\dotsc, \eta(g_r),
  \end{gather*}
  while the generator $\sigma_ i $ of $\zopfr$ acts as follows
  \begin{gather}
    \label{azione}
    \sigma_i \cd (g_1, \lds, g_r) = (g_1,\dots,\underbrace{g_i
      g_{i+1}g_i^{-1}}_i,\underbrace{g_{i}}_{i+1},\dotsc, g_r).
  \end{gather}
  Denote by $\Sigma_r$ the symmetric group.  There is a surjective
  morphism
  \begin{equation}
    \label{rhouta}
    \pro : \zopfr \ra \Sigma_r,  
  \end{equation}
  which maps the $i$-th Hurwitz move $\sigma_i$ to the transposition
  $(i, i+1)$.  The map \eqref{eqn:signaturemap} is $\rho$-equivariant:
  if $\psi$ is in $\zopfr$ and $\sigma=\rho (\psi)$ then
  $( \psi \cd (g_1 , \lds, g_r) )$ is mapped to
  $(\ord(g_{\sigma_1}) , \lds, \ord(g_{\sigma_r}))$.
  
  Recall that Hurwitz equivalence in $\datarg(G)$ is defined in terms
  of the action of $\braid\times\Aut G$; thus,
  \eqref{eqn:signaturemap} maps Hurwitz equivalence classes onto
  $\Sigma_r$-orbits in $\Zeta^r$. This shows that every Hurwitz
  equivalence class has a representative with signature of the form
  \[\mm=(m_1,\dotsc, m_r), \quad m_1\leq \dots \leq m_r,\]
  and $\mm$ is uniquely determined by the equivalence class.
\end{say}

\begin{say}
  As a first step we wish to determine the set of all possible
  signatures.  We iterate over the order $d=|G|$. For fixed $d$, let
  $\mathfrak{S}_{d,g_{\max}}$ be the set of finite sequences
  $\mm=(m_1,\dotsc, m_r)$ such that
  \begin{enumerate}[label=(S\arabic*)]
  \item \label{item:S1} $3\leq r\leq \frac{4(g_{\max}-1)}d+4$;
  \item \label{item:S2} each $m_i$ is a divisor of $d$;
  \item \label{item:S3} $2\leq m_i\leq d$;
  \item \label{item:S4} $g$, determined by $d$ and $\mm$ as in
    \eqref{RH}, is an integer between $2$ and $g_{\max}$.
  \item \label{item:S4b} if $r=3$, $d$ satisfies the bound of
    \cite[Appendix 1]{Conder:Large group actions on surfaces} as long
    as $g \leq 301$.
  \item \label{item:S5}$m_1\leq \dots \leq m_r$;
  \end{enumerate}
  It follows from the same arguments as in the proof of Hurwitz
  theorem that the signature of any $G$-curve satisfies \ref{item:S1},
  see e.g.  Lemma 3.2 (b) in \cite
  {ContiGhigiPignatelli:SomeEvidence}.  Notice that we do not exclude
  $r=3$ or cyclic groups here, unlike in
  \cite{ContiGhigiPignatelli:SomeEvidence}.
  
  Computing $\mathfrak{S}_{d,g_{\max}}$ is not difficult (see
  e.g. \cite[Algorithm 1]{ContiGhigiPignatelli:SomeEvidence}). In our
  implementation, we found it convenient to store the resulting
  signatures on disk for later retrieval; this allows us to iterate
  through signatures with fixed genus $g$ at a later stage, in order
  to compute the sets $\mathscr{D}_g(G)$. Upon retrieving signatures
  with fixed genus, we make use of the inequalities $d\leq 84(g-1)$
  (for $r=3$) and $d\leq 12(g-1)$ (for $r\geq 4$, see \cite[Lemma
  3.2.(c)]{ContiGhigiPignatelli:SomeEvidence}), which do not appear in
  the definition of $\mathfrak{S}_{d,g_{\max}}$ since they are
  consequences of \eqref{RH}.
\end{say}

\begin{say}
  Problem~\ref{problem:classification} can be addressed by iterating
  through the signatures in $\mathfrak{S}_{d,g_{\max}}$. For each
  signature, we iterate through isomorphism classes of groups $G$ of
  order $d$. Some groups can be eliminated right away, namely:
  \begin{itemize}
  \item groups $G$ that do not contain elements of order $m_i$ for
    some $m_i$ in the signature;
  \item groups $G$ that contain elements of order greater than $4g+2$
    (for $r=3$) or $4(g-1)$ (for $r>3$);
  \item groups that cannot be generated by $r-1$ elements because
    their abelianization cannot.
  \end{itemize}
  Similar exclusions are listed in \cite[Algorithm
  2]{ContiGhigiPignatelli:SomeEvidence}, with some differences due to
  the fact that we allow $r=3$ and cyclic groups here.
\end{say}

\begin{say}
  What remains to be done is, in fact, the most complicated and most
  novel part of our work, namely producing an algorithm to classify
  spherical systems of generators for fixed $G$ and $\mm$. The
  construction of this algorithm will take the rest of this section.

  Fix a group $G$ of order $d$ and a signature $\mm$, and let $\xgm$
  be the set of spherical systems of generators of $G$ with signature
  $\mm$. The group
  \[(\braid)_{\mathbf{m}}=\{g\in \braid\mid \rho(g)\cdot\mm=\mm\}\]
  acts on $\xgm$, and so does $\Aut G$. We will need the following:
\end{say}
\begin{prop}
  \label{prop:classify}
  Given a group $G$ of order $d$ and $\mm$ in $\siggr$, two elements
  $\Datum,\Datum'$ of $\xgm$ are Hurwitz equivalent if and only if
  they are in the same $(\zopfr)_{\mm}\times \aug$-orbit.
\end{prop}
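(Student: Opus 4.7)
The plan is to show the non-trivial implication by using the $\rho$-equivariance of the signature map already established in \S\ref{ignore_order}, since the reverse inclusion $(\braid)_\mm\times\aug\subseteq \braid\times\aug$ is tautological and gives one direction for free.

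First I would fix $\theta,\theta'\in \xgm$ and assume they are Hurwitz equivalent, so that $\theta'=(\psi,\eta)\cdot\theta$ for some $\psi\in\braid$ and $\eta\in\Aut G$. The goal is to prove $\psi\in(\braid)_\mm$, i.e.\ $\rho(\psi)\cdot\mm=\mm$.

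The key step is computing the signature of $\theta'$ in two ways. On the one hand, by hypothesis $\theta'\in\xgm$, so its signature is exactly $\mm$. On the other hand, since $\eta\in\Aut G$ preserves orders of elements componentwise, passing from $\theta$ to $\eta\circ\theta$ does not change the signature; and by the $\rho$-equivariance statement recalled in \S\ref{ignore_order} (based on the fact that $\phi(\psi)^{-1}(\ga_i)$ is conjugate in $\Ga_r$ to $\ga_{\rho(\psi)(i)}$, by condition (1) of Definition~\ref{sayautstar}), the signature of $\theta\circ\phi(\psi)^{-1}$ is $\rho(\psi)\cdot\mm$. Equating the two expressions yields $\rho(\psi)\cdot\mm=\mm$, which is exactly the condition defining $(\braid)_\mm$. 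Hence $(\psi,\eta)\in (\braid)_\mm\times\aug$ and $\theta,\theta'$ lie in the same orbit of this subgroup.

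There is really no serious obstacle: the whole content sits in the observation that the action of $\braid$ permutes the entries of the signature through the surjection $\rho\colon\braid\to\Sigma_r$, while $\Aut G$ acts trivially on signatures. The only point that requires a little care is justifying that the $\ga_i$'s are shuffled (up to conjugation) according to $\rho(\psi)$ for \emph{arbitrary} $\psi\in\braid$, not just the generators $\sigma_i$; this follows because the set of $\psi$'s for which the claim holds is closed under composition and contains every $\ts_i$, by the explicit formula \eqref{deftztz}.
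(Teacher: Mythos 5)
Your proof is correct and takes essentially the same route as the paper's: the subgroup inclusion handles the trivial direction, and the non-trivial direction is exactly the paper's argument, namely applying the $\rho$-equivariance of the signature map \eqref{eqn:signaturemap} to $\Datum'=(\psi,\eta)\cdot\Datum$ and equating the resulting signature $\rho(\psi)\cdot\mm$ with $\mm$ to conclude $\psi\in(\braid)_\mm$. The additional justification you supply for equivariance under arbitrary braids (not just the generators $\ts_i$) is a detail the paper treats as already established in \S\ref{ignore_order}, so there is no divergence of substance.
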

\begin{proof}
  If $\Datum$ and $\Datum'$ are in the same
  $(\zopfr)_{\mm}\times \aug$-orbit, they are obviously Hurwitz
  equivalent.

  Conversely, suppose $\Datum'=(\nu,\alpha)\cdot\Datum$, with
  $\nu\in\braid$, $\alpha\in\Aut G$; by equivariance of
  \eqref{eqn:signaturemap}, the signature of $\Datum'$ is
  $\mm=\rho(\nu)\cdot\mm$, so $\nu$ lies in $(\braid)_{\mathbf{m}}$.
\end{proof}

\begin{say}
  \label{say:determinesection}
  Set for simplicity
  \begin{gather}
    \label{def-HH}
    \HH:= \zopfr \times \Aut G.
  \end{gather}
  Then $ \HH_\mm = (\zopfr)_\mm \times \Aut G $. We are reduced to the
  following problem: given a group $G$ of order $d$ and $\mm$ in
  $\siggr$, determine a \emph{section} for the action of $\HM$ on
  $\xgm$, that is to say a subset of $\xgm$ that contains exactly one
  element in each $\HM$-orbit.

  The approach used in \cite{BCGP,fgp} was to iterate through lists of
  elements in
  \begin{gather*}
    G^{\mathbf{m}}=\{(g_1,\dotsc, g_r)\in G^r\mid
    \ord(g_1)=m_1,\dotsc, \ord(g_r)=m_r\}
  \end{gather*}
  and identify those for which $\prod g_i=1$ and
  $\langle g_1,\dotsc, g_r\rangle =G$. This produces a set of
  spherical systems of generators which can become quite large as $r$
  or $|G|$ increase; the fact that we are ultimately interested in
  extracting a representative for each orbit of $\HM$ suggests that an
  alternative approach could be more suited to our goal.

  A preliminary observation is that, whilst the group $\HM$ is
  infinite, it can be replaced by its image in
  $\operatorname{Perm}(\xgm)$, the symmetric group over $\xgm$, which
  is finite. In prior algorithms, precisely first in the paper
  \cite{BCGP} and then in
  \cite{BP12,fgp,fpp,FP,BP16,gleissner,ContiGhigiPignatelli:SomeEvidence},
  the orbit of an element is calculated by a heavy recursive procedure
  that builds an increasing chain of sets by the action of a fixed set
  of generators of $\HH$ and stops when the chain stabilizes. By
  contrast, here we build first the image of $\HM$ as subgroup of
  $\operatorname{Perm}(\xgm)$, then compute directly all orbits
  without any recursion.

  After this first step, the problem boils down to extracting a
  section for the action of a finite group on a finite set. This can
  be achieved efficiently in \magma using \texttt{GSets}. It is then
  the size of the finite group and of the finite set $\xgm$ that
  determine memory use and execution time.

  The basic idea to reduce both is to split the action of $\HM$ on
  $G^\mm$ into a large number of actions, each one by a much smaller
  group acting on a much smaller set.  The key observation is the
  following.  Consider the set $\mathcal{C}_G$ of conjugacy classes in
  $G$, and consider the commutative diagram
  \begin{equation}
    \label{diag:Gr}
    \xymatrix{ G^r\ar[r]^p\ar[dr]^\pi & (\mathcal{C}_G)^r\ar[d]\\
      &(\mathcal{C}_G)^r/\Sigma_r}
  \end{equation}
  where $p(g_1,\dotsc, g_r)=([g_1],\dotsc, [g_r])$.  Setting
  $\mathcal{C}_G^{\mathbf{m}}:= p (G^\mm)$, we obtain a commutative
  diagram
  \begin{equation}
    \label{diag:Gm}
    \xymatrix{G^{\mathbf{m}}\ar[r]^p\ar[dr]^\pi & \mathcal{C}_G^{\mathbf{m}}\ar[d]\\
      &\mathcal{C}_G^{\mathbf{m}}/{(\Sigma_r)_\mathbf{m}}}
  \end{equation}
  with $(\Sigma_r)_\mathbf{m}$ denoting the stabiliser of $\mathbf{m}$
  in $\Sigma_r$.  The group $\HH$ acts also on $(\mathcal{C}_G)^r$ in
  the following way:
  \begin{equation*}
    \begin{aligned}
      \eta\cdot  ([g_1],\dotsc, [g_r])&=(\eta([g_1]),\dotsc, \eta([g_r]), \qquad \eta \in \Aut G\\
      \sigma_i\cdot ([g_1],\dotsc, [g_r])&=
      ([g_1],\dots,\underbrace{[g_{i+1}]}_i,\underbrace{[g_{i}]}_{i+1},\dotsc,
      [g_r]).
    \end{aligned}
  \end{equation*}
  This means that if $(\eta,\psi) \in \HH$ and $ \sigma = \rho(\nu)$,
  then
  \begin{gather}
    \label{pVppa}
    (\eta,\nu)\cd ([g_1],\dotsc, [g_r]) = ([\eta(g_{\sigma_1})], \lds,
    [\eta(g_{\sigma_r})]).
  \end{gather}
  For $\tilde{c} \in (\mathcal{C}_G)^r$, let $\HH_{\tilde {c}}$
  denotes the stabilizer for this action.

  Since $(\mathcal{C}_G)^r/\Sigma_r$ is the quotient of
  $(\mathcal{C}_G)^r$ by $\braid$, it has an induced action of
  $\HH$ 
  with the factor $\braid$ acting trivially.  In other words $\Aut
  G$ acts naturally on $(\mathcal{C}_G)^r/\Sigma_r$ and we let
  $\zopfr$ act trivially on this set. With this understood the map
  $p$ and the whole Diagram \eqref{diag:Gr} is
  $\HH$-equivariant, while \eqref{diag:Gm} is $\HM$-equivariant.
\end{say}

Notice that a section $S$ for the action of $\Aut
G$ on
$\mathcal{C}_G^{\mathbf{m}}/{(\Sigma_r)_\mathbf{m}}$ can be
constructed adapting \cite[Algorithm
4]{ContiGhigiPignatelli:SomeEvidence}.  From this, we recover a
section for the action of $\HM$ on $G^{\mathbf{m}}$ as follows:
\begin{prop}
  \label{prop:section}
  Let $S\subset
  \mathcal{C}_G^{\mathbf{m}}/{(\Sigma_r)_\mathbf{m}}$ be a section for
  the action of $\Aut G$, and let $\tilde
  S$ be a subset of
  $\mathcal{C}_G^{\mathbf{m}}$ that projects one-to-one onto
  $S$. Let $S'\subset G^{\mathbf
    m}$ be the union of sections for the action of $\HH_{\tilde
    c}$ on $p^{-1}(\tilde c)$, as $\tilde c$ varies in $\tilde
  S$.  Then $S'$ is a section for the action of $\HM$ on $G^\mm$.
\end{prop}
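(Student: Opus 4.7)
The plan is to verify separately the two defining properties of a section: every $\HM$-orbit in $G^\mm$ meets $S'$, and it does so at most once. Both halves hinge on the $\HM$-equivariance of the maps $p$ and $\pi$ in diagram \eqref{diag:Gm}, together with the containment $\HH_{\tilde c}\subseteq\HM$ for any $\tilde c\in\mathcal{C}_G^\mm$, so I would begin by nailing down these general facts.

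First I would confirm the structure of the induced action. By \eqref{pVppa} the braid factor of $\HH$ acts on $\mathcal{C}_G^r$ solely through $\pro:\zopfr\to\Sigma_r$; hence the $\HM$-action on $\mathcal{C}_G^\mm$ factors through $\Aut G\times(\Sigma_r)_\mm$, using that $\pro((\zopfr)_\mm)=(\Sigma_r)_\mm$ by the surjectivity of $\pro$. Consequently the quotient map $q:\mathcal{C}_G^\mm\to\mathcal{C}_G^\mm/(\Sigma_r)_\mm$ is $\HM$-equivariant for the residual $\Aut G$-action on the target, and so is $\pi=q\circ p$. Moreover, any element of $\HH$ stabilising some $\tilde c\in\mathcal{C}_G^\mm$ must preserve its signature $\mm$, so $\HH_{\tilde c}\subseteq\HM$; and $\HH_{\tilde c}$ visibly preserves the fibre $p^{-1}(\tilde c)$.

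For existence, given $x\in G^\mm$ there is $\alpha\in\Aut G$ with $\alpha\cdot\pi(x)\in S$; let $\tilde s\in\tilde S$ be its unique lift. Since $q(\alpha\cdot p(x))=\alpha\cdot\pi(x)=q(\tilde s)$, some $\sigma\in(\Sigma_r)_\mm$ satisfies $\sigma\cdot\alpha\cdot p(x)=\tilde s$; lifting $\sigma$ to $\nu\in(\zopfr)_\mm$ via $\pro$, the element $(\nu,\alpha)\in\HM$ moves $x$ into $p^{-1}(\tilde s)$. A further element of $\HH_{\tilde s}$ brings it into $S'$ by construction of $S'$.

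For uniqueness, suppose $x,x'\in S'$ are $\HM$-equivalent, with $x\in p^{-1}(\tilde c)$ and $x'\in p^{-1}(\tilde c')$ for some $\tilde c,\tilde c'\in\tilde S$. Then $\pi(x)=q(\tilde c)$ and $\pi(x')=q(\tilde c')$ lie in the same $\Aut G$-orbit inside $S$; the section property forces $q(\tilde c)=q(\tilde c')$ and injectivity of $\tilde S\to S$ then forces $\tilde c=\tilde c'$. Any $h\in\HM$ with $h\cdot x=x'$ then satisfies $h\cdot\tilde c=p(x')=\tilde c$, so $h\in\HH_{\tilde c}$; since the $\tilde c$-piece of $S'$ is by construction a section for $\HH_{\tilde c}$ on $p^{-1}(\tilde c)$, we conclude $x=x'$. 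The main obstacle lies entirely in the setup paragraph, in correctly identifying how $\HM$ acts on the conjugacy-class quotient and in verifying the stabiliser inclusion $\HH_{\tilde c}\subseteq\HM$; once those bookkeeping checks are in hand, existence and uniqueness become a clean two-level reduction, first to the $\Aut G$-section $S$ on the quotient and then to the $\HH_{\tilde c}$-sections on the individual fibres.
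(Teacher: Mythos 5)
Your proof is correct and follows essentially the same two-level argument as the paper: existence by first applying $\Aut G$ to bring $\pi(X)$ into $S$, then lifting a permutation in $(\Sigma_r)_\mm$ through $\rho$ to a braid that lands the element in $p^{-1}(\tilde c)$, and uniqueness by descending through $\pi$, invoking the section property of $S$, the injectivity of $\tilde S \to S$, and the resulting inclusion of the connecting group element in $\HH_{\tilde c}$. The only difference is cosmetic: you make explicit the equivariance bookkeeping and the inclusion $\HH_{\tilde c}\subseteq \HM$, which the paper uses without comment.
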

\begin{proof}
  We need to show that $\HM\cdot S'=G^\mm$ and that two elements $X,Y
  \in S'$ that belong to the same $\HM$-orbit coincide.

  To see that $\HM\cdot S'=G^\mm$, pick $X=(g_1, \lds, g_r)$ in
  $G^\mm$. Up to the action of $\Aut G$, we can assume that
  $\pi(X)\in S$. Then $p(X)=\sigma\cd \tilde c$ for some
  $\sigma\in(\Sigma_r)_\mm$ and some $\tilde c\in \tilde S$. If
  $\sigma=\rho(\nu)$, where $\rho$ is the map in \eqref{rhouta}, then
  $\nu$ is in $(\zopfr)_{\mm}$ and
  $\nu\meno \cd X =\nu^{-1}\cdot(g_1,\dotsc, g_r)$ is in
  $p^{-1}(\tilde c)$, so its $\HH_{\tilde c}$-orbit intersects
  $S'$. As $\HH_{\tilde c} \subset \HM$ we conclude that the
  $\HM$-orbit of $X$ intersects $S'$, as desired.

  Next assume that $X$, $Y$ are elements of $S'$ that belong to the
  same $\HM$-orbit. By equivariance $\pi(X)$ and $\pi(Y)$ are in the
  same $\HM$-orbit.  But $(\zopfr)_\mm$ acts trivially on
  $\mathcal{C}_G^\mm/(\Sigma_r)_\mm$.  Hence $\pi(X)$ and $\pi(Y)$ are
  in the same $\Aut G$-orbit. By construction they also belong to $S$;
  therefore, they coincide. Since $\tilde c=p(X)$ and $p(Y)$ are
  elements of $\tilde S$ that lie over the same element of
  $\mathcal{C}_G^{\mathbf{m}}/{(\Sigma_r)_\mathbf{m}}$, they also
  coincide. By equivariance, $X$ and $Y$ are in the same $\HM$-orbit
  only if they are in the same $\HH_{\tilde c}$-orbit, which forces
  them to be the same.
\end{proof}
  
A first advantage of this approach is that for some $G$ and
$\mathbf{m}$, $p^{-1}(\tilde c)$ can be considerably smaller than the
whole $G^{\mathbf{m}}$, leading to a reduced memory usage. In
addition, $\HH_{\tilde c}$ has index
\[[\HM:\HH_{\tilde c}]=[(\braid)_{\mathbf m} : (\braid)_{\tilde c}]
  \cd [\Aut G : (\Aut G)_{\tilde c}].\] This means that
$\HH_{\tilde{c}}$ is typically smaller than $\HM$ and the image of
$\HH_{\tilde c}$ in $\operatorname{Perm}(p^{-1}(\tilde c))$ is smaller
than the image of $\HM$ in $\operatorname{Perm}(G^{\mathbf{m}})$, in a
way that more than compensates for the fact that one has to iterate
through elements of $S$.

\begin{ex}
  \label{example:Sigma4}
  For example, take $G=\Sigma_4$ and
  $\mathbf m=(\underbrace{2,\dots,2}_r)$. Then $G$ has nine elements
  of order two, namely the conjugacy class of $(1,2)$ and the
  conjugacy class of $(1,2)(3,4)$, call them $C_1$ and $C_2$. Then
  $|C_1|=6$, $|C_2|=3$ and
  \[G^{\mathbf m}=(C_1\cup C_2)^r\] has $9^r$ elements.

  However, if
  \begin{equation}
    \label{eqn:tildecsigma4}
    \tilde c=(\underbrace{C_1,\dotsc, C_1}_k,\underbrace{C_2,\dotsc, C_2}_{r-k}),
  \end{equation}
  up to the order, then $p^{-1}(\tilde c)$ has $6^k 3^{r-k}$ elements,
  which is considerably less.  It follows from \eqref{pVppa} that
  $(\zopfr)_{\tilde c} = \rho\meno ((\Sigma_r)_ {\tilde c}) $, so
  $\zopfr / (\zopfr)_{\tilde c} \cong \Sigma_r /(\Sigma_r)_{\tilde
    c}$. By \eqref{eqn:tildecsigma4}
  $(\Sigma_r)_{\tilde {c}} \cong \Sigma_k \times \Sigma_{r-k}$, so
  $[\zopfr : (\zopfr)_{\tilde c} ] = \binom{r}{k}$.  Moreover in this
  special case $\Aut G$ fixes every conjugacy class, as every
  automorphism of $G=\Sigma_4$ is inner \cite[vol. 1 Satz 5.5
  p. 175]{Huppert} or \cite[Cor. 7.7 p. 159]{rotman}.  Hence $\Aut G$
  fixes $\tilde c$, and
  $\HH_{\tilde c}=(\braid)_{\tilde c}\times \Aut G$ has index
  $\binom{r}{k}$ in $\HM$. Furthermore,
  $\mathcal{C}_G^{\mathbf{m}} / (\Sigma_r)_{\mathbf{m}} $ has $r+1$
  elements indexed by $k$ as above and $\Aut G$ acts trivially. We
  have split the action into $r+1$ actions of much smaller size.
\end{ex}

\begin{say}
  In order to apply Proposition~\ref{prop:section}, one needs to
  compute the stabilizer $\HH_{\tilde c}$ of an element $\tilde c$ in
  $\mathcal{C}_G^{\mathbf{m}}$.
  
  In the above example $\HH_{\tilde{c}}$ splits as a product of
  $(\zopfr)_{\tilde{c}}$ and $(\Aut G)_{\tilde{c}}$. In general
  $(\zopfr)_{\tilde{c}} \times (\Aut G)_{\tilde{c}}$ is only a subgroup
  of $\HH_{\tilde{c}}$.  In fact an element of $\zopfr$ can move the
  conjugacy classes and element of $\Aut G$ can restore them to their
  original order.  The situation in the general case is described by
  the exact sequence
  \begin{equation}
    \label{eqn:Hc}
    1\to (\braid)_{\tilde c}\stackrel{\alfa}{\lra} \HH_{\tilde c}\stackrel{\beta}{\lra} (\Aut G)_c\to 1,
  \end{equation}
  where $\alfa$ is the inclusion: $\alf(\psi) = (\psi,1)$, $\beta$ is
  the projection $\beta(\psi, \eta) = \eta$, and $c$ is the image of
  $\tilde c$ in the quotient
  $\quozient{\mathcal{C}_G^{\mathbf{m}}}{(\Sigma_r)_\mm}$.  Indeed if
  $(\psi,\eta) \in \HH_{\tilde{c}}$ and $\rho(\psi) = \sigma$, then by
  \eqref{pVppa} we have
  \begin{gather*}
    (\psi,\eta)\cd (\tilde{c}_1, \lds, \tilde{c}_r) =
    (\eta(\tilde{c}_{\sigma(1)}), \lds, \eta(\tilde{c}_{\sigma(r)}) )
    = (\tilde{c}_1, \lds, \tilde{c}_r).
  \end{gather*}
  Thus $\eta$ permutes the elements $\tilde{c}_i$ i.e. it fixes $c$.
  This shows that $\beta$ lands in $(\aug)_c$.  Obviously
  $\beta \alfa = 1$.  If $(\psi,\eta) \in \ker \beta$, then
  $\eta=\id_G$ hence $\sigma =1$ and $\psi \in (\zopfr)_{\tilde{c}}$.
  If $\eta \in (\aug)_c$, then $\eta(c_i) = c_{\sigma(i)}$. Since
  $\rho$ is surjective there is $\psi \in \zopfr$ such that
  $\rho(\psi) = \sigma^{-1}$ and then
  $(\psi,\eta) \in \HH_{\tilde{c}}$. Thus $\beta$ is onto.

  In particular, one can obtain a set of generators for
  $\HH_{\tilde c}$ by choosing elements $\gamma_1,\dotsc, \gamma_k$
  such that $\beta(\gamma_1),\dotsc, \beta(\gamma_k)$ generate
  $(\Aut G)_c$ and adding a set of generators of
  $(\braid)_{\tilde c}$.

  So we need to find a set of generators of $(\braid)_{\tilde c}$.  We
  start with the following observation.  An element of
  $(\mathcal{C}_G)^r/\Sigma_r$ (or of
  $\mathcal{C}_G^\mm / \Sigma_\mm$) is represented in our
  implementation as a \emph{multiset}, i.e.
  $(C_1^{r_1}, \lds, C_s^{r_s})$ where $C_i \in \mathcal{C}_G$ and
  $r_i \geq 1$.  To each multiset of conjugacy classes $c$ there
  correspond several ordered sequences of conjugacy classes
  $\tilde{c}$.  We fix a total ordering on the set of conjugacy
  classes $\mathcal{C}_G$.  Then, among all sequences $\tilde{c}$
  corresponding to the same $c$ there is a minimal one with respect to
  the induced lexicographic order on $\mathcal{C}^r_G$, i.e.  the only
  nondecreasing one:
  \begin{gather}
    \label{pio}
    \tilde{c}=(C_1,\dotsc, C_r) \quad \text{ with }\quad C_1\leq \dots
    \leq C_r.
  \end{gather}
  For ease of notation, multisets will be represented by the
  associated nondecreasing sequence in the following.
  
  In order to find generators of $(\braid)_{\tilde c}$ one can
  consider the exact sequence
  \[0\to \mathcal{P}\braid \to \braid \stackrel{\rho}{\to} \Sigma_r\to
    0,\] where $\mathcal{P}\braid$ denote the group of pure braids,
  i.e. $\ker \rho$.  We also have the following exact sequence
  obtained by restriction:
  \[0\to \mathcal{P}\braid \to (\braid)_{\tilde c} \to
    (\Sigma_r)_{\tilde c}\to 0.\] Recall from \cite[p. 20]{birman}
  that $\mathcal{P}\braid$ is generated by the elements
  \[A_{ij}=\sigma_{j-1}\sigma_{j-2}\dotsm\sigma_{i+1}\sigma_i^2\sigma_{i+1}^{-1}\dotsm
    \sigma_{j-2}^{-1}\sigma_{j-1}^{-1}, \quad 1\leq i<j\leq r.\] Since
  we chose $\tilde{c}$ with $C_1\leq \dots \leq C_r$, the subgroup
  $(\Sigma_r)_{\tilde c}$ is generated by transpositions $(i\ i+1)$
  where $i$ is such that $C_i=C_{i+1}$.  It follows that
  $(\braid)_{\tilde c}$ is generated by
  \[\{A_{ij}\mid 1\leq i<j\leq r\}\cup \{\sigma_i \mid 1\leq i\leq
    r-1, C_i=C_{i+1}\};\] notice however that when $C_i=C_j$, then
  $C_i=C_{i+1}=\dots = C_j$, so $A_{ij}$ belongs to the group
  generated by $\sigma_i,\dotsc, \sigma_{j-1}$, and it is redundant as
  a generator of $(\braid)_{\tilde c}$. This gives
  Algorithm~\ref{algo:Hc}.

  \begin{algorithm}[th] \SetKwInOut{Input}{input}
    \SetKwInOut{Output}{output} \Input{A group $G$ and an element
      $\tilde c=(C_1,\dotsc, C_r)\in\mathcal{C}_G^r$,
      $C_1\leq \dots\leq C_r$} \Output{A set of generators for
      $\HH_{\tilde c}$}
    $\Gamma\leftarrow \{A_{ij}\mid 1\leq i<j\leq r, C_i\neq C_j\}\cup
    \{\sigma_i \mid 1\leq i\leq r-1, C_i=C_{i+1}\}$\; \For{$\phi$ in a
      set of generators for $(\Aut G)_{c}$}{ $\sigma\leftarrow$ a
      permutation such that $ \phi\cd \tilde c=\sigma\cd\tilde c$\;
      $\alpha_{i_1}\dotsm \alpha_{i_k}\leftarrow$ a decomposition of
      $\sigma$ as a product of transpositions $\alpha_j=(j\,j+1)$\;
      $\psi\leftarrow\sigma_{i_1}\dotsm \sigma_{i_k}$\; add
      $(\psi^{-1},\phi)\in\braid\times\Aut G$ to $\Gamma$ } \Return
    $\Gamma$
    \caption{Computing $\HH_{\tilde c}$}\label{algo:Hc}
  \end{algorithm}
\end{say}

  \begin{say}
    The problem considered in this paper is to compute effectively a
    section for the action of $\HH$ on $\datar(G)$. Logically, the
    problem can be split in two parts: first computing a section for
    the action of $\HH$ on $G^r$, next checking which elements of the
    section are spherical systems of generators of $G$. For reasons of
    efficiency, our algorithm does not attack the two problems one
    after another, but simultaneously.
    
    Proposition~\ref{prop:section} reduces the first problem to
    determining a section for the action of $\HH_{\tilde c}$ on each
    $p^{-1}(\tilde c)$. In view of the second part, two more
    optimizations are important, already used by Breuer \cite{breuer}
    and Paulhus \cite{paulhus}.  Indeed, for some elements $c$, one
    can ascertain \emph{a priori} that $\pi^{-1}(c)=p^{-1}(\tilde c)$
    does not contain any system of generators at all!
  \end{say}

  This is based on a theorem of Frobenius, see
  \cite[p. 406]{lando-zvonkin} for a proof and also \cite{jones} for a
  generalization to higher genus.
  \begin{teo}[Frobenius's formula] Given a finite group $G$ and
    conjugacy classes $C_1,\dotsc, C_r$, the number of $r$-ples
    $(g_1,\dotsc, g_r)\in C_1\times \dots \times C_r$ such that
    $\prod g_i=1$ is
    \[\frac{|C_1|\dotsm |C_r|}{|G|} \sum_{\chi} \frac{\chi(C_1)\dotsm
        \chi(C_r)}{\chi(1)^{r-2}},\] where the sum is over characters
    of irreducible representations of $G$.
  \end{teo}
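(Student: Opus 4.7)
The plan is to work in the group algebra $\C[G]$ and recognise the desired count as the coefficient of $1_G$ in a product of class sums. Introduce for each conjugacy class $C_i$ the class sum $K_i := \sum_{g\in C_i} g \in \C[G]$. Expanding the product yields
\[
K_1 K_2 \cdots K_r = \sum_{(g_1,\dotsc,g_r)\in C_1\times\cdots\times C_r} g_1 g_2\cdots g_r,
\]
so the coefficient of the identity $1_G$ in $K_1\cdots K_r$ is exactly the number $N$ we wish to count.

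Next I would use the fact that each $K_i$ lies in the centre of $\C[G]$. By Schur's lemma, $K_i$ acts on every irreducible representation $V_\chi$ as a scalar $\omega_\chi(C_i)$; taking traces on both sides identifies this scalar as $\omega_\chi(C_i) = |C_i|\chi(C_i)/\chi(1)$. Consequently, on $V_\chi$ the central element $K_1\cdots K_r$ acts as the scalar $\prod_{i=1}^r \omega_\chi(C_i)$.

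To extract the coefficient of $1_G$ from a general element $x=\sum_g x_g g \in \C[G]$, I would use the left regular representation: since $\operatorname{tr}(L_g) = |G|$ when $g=1_G$ and $0$ otherwise, we have $x_{1_G} = \operatorname{tr}(L_x)/|G|$. Applying this to $x = K_1\cdots K_r$ and using the decomposition $\C[G]\cong \bigoplus_\chi V_\chi^{\oplus \chi(1)}$, the trace computes to
\[
\operatorname{tr}(L_{K_1\cdots K_r}) = \sum_\chi \chi(1)\cdot \chi(1)\prod_{i=1}^r \omega_\chi(C_i) = \sum_\chi \chi(1)^2 \prod_{i=1}^r \frac{|C_i|\chi(C_i)}{\chi(1)}.
\]
Pulling the factor $|C_1|\cdots |C_r|$ out of the sum and simplifying the remaining powers of $\chi(1)$ (we divide by $\chi(1)$ once for each of the $r$ factors, against the prefactor $\chi(1)^2$) yields precisely $\tfrac{|C_1|\cdots|C_r|}{|G|}\sum_\chi \chi(C_1)\cdots\chi(C_r)/\chi(1)^{r-2}$, dividing by $|G|$.

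The argument is a routine chain of standard identifications once the starting observation — that $N$ is a coefficient of a central product — is made. There is no serious obstacle; the only delicate point is bookkeeping the powers of $\chi(1)$, which explains the exponent $r-2$ in the denominator of the final formula.
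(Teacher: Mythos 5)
Your proof is correct: the identification of the count as the coefficient of $1_G$ in the product of class sums $K_1\cdots K_r$, the use of Schur's lemma to get $\omega_\chi(C_i)=|C_i|\chi(C_i)/\chi(1)$, and the trace extraction via the regular representation $\C[G]\cong\bigoplus_\chi V_\chi^{\oplus\chi(1)}$ all fit together without gaps, and the power bookkeeping $\chi(1)^2/\chi(1)^r=\chi(1)^{-(r-2)}$ is right. Note that the paper itself gives no proof of this theorem --- it is quoted from the literature with a pointer to Lando--Zvonkin (p.~406) --- and the argument given there is essentially the same classical group-algebra/class-sum computation you have written out, so your proposal coincides with the standard proof the paper relies on.
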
 Thus, $p^{-1}(C_1,\dotsc, C_r)$ can only contain a system
  of spherical generators if
  $\sum_{\chi} \frac{\chi(C_1)\dotsm \chi(C_r)}{\chi(1)^{r-2}}$ is
  nonzero; in this case, we will say that $(C_1,\dotsc, C_r)$ passes
  Frobenius' test. Notice that this condition is independent of the
  order of the conjugacy classes $C_1, \lds, C_r$.

  \begin{ex} In the setting of Example~\ref{example:Sigma4} it is easy
    to check, by looking at the character table of $\Sigma_4$, that
    Frobenius formula evaluates to zero for $k$ odd in
    \eqref{eqn:tildecsigma4}. This eliminates half of the elements of
    $S$. Notice that in this particular case, the same conclusion can be reached by observing that when elements $g_1,\dotsc, g_r$ of $\Sigma_4$  satisfy $\prod g_i=1$, then the product of their signs must be $1$.
  \end{ex}

  A second condition is based on a theorem of Scott \cite[Theorem
  1]{Scott}:
  \begin{teo}[Scott]
    \label{thm:Scott} Let $G$ be a group generated by
    $g_1,\dotsc, g_n$ with $g_1\dotsm g_n=1$ and let $V$ be a
    finite-dimensional representation of $G$ over any field. Then
    \[\sum_{i=1}^n v(g_i)\geq v(G)+v(G^*),\] where $v(g_i)=\codim
    V^{g_i}$, $v(G)=\codim V^G$, $v(G^*)=\codim (V^*)^G$.
  \end{teo}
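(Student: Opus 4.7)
The plan is to encode the hypothesis $g_1\cdots g_n=1$ in a length-two linear complex and extract the inequality via rank--nullity. Set $h_i:=g_1g_2\cdots g_{i-1}$, so that $h_1=1$ and, by hypothesis, $h_{n+1}=g_1\cdots g_n=1$. Consider the maps
\[
V \xrightarrow{\alpha} \bigoplus_{i=1}^n (g_i-1)V \xrightarrow{\beta} V, \qquad \alpha(v)=\bigl((g_i-1)v\bigr)_i,\quad \beta\bigl((w_i)\bigr)=\sum_{i=1}^n h_iw_i.
\]
A direct telescoping gives $\beta\alpha(v)=\sum_i h_i(g_i-1)v=\sum_i(h_{i+1}-h_i)v=(h_{n+1}-h_1)v=0$, so this is a complex.

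Because $g_1,\ldots,g_n$ generate $G$, one has $\ker\alpha=\bigcap_i V^{g_i}=V^G$, whence $\rank\alpha=v(G)$. On the other side, I would prove that $\Im\beta$ coincides with the submodule $IV\subseteq V$ spanned by all $(g-1)u$ for $g\in G$, $u\in V$. One inclusion is immediate. For the reverse I would first establish $IV=\sum_i(g_i-1)V$ by induction on word length, using the identity $(ab-1)V=(a-1)V+(b-1)V$ (valid since $b$ acts invertibly) together with $(g_j^{-1}-1)V=(g_j-1)V$ (via $g_j^{-1}=g_j^{\ord(g_j)-1}$), and then verify that the subspace $W:=\sum_i h_i(g_i-1)V$ is $G$-stable; since $W\supseteq(g_1-1)V$, $G$-stability forces $W\supseteq IV$, hence $W=IV$. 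The standard duality $(V/IV)^\ast\cong(V^\ast)^G$ then gives $\dim IV=v(G^\ast)$, so $\rank\beta=v(G^\ast)$.

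Finally, $\Im\alpha\subseteq\ker\beta$ combined with rank--nullity on $\beta$ yields
\[
v(G)=\rank\alpha\leq\dim\ker\beta=\sum_{i=1}^n v(g_i)-\rank\beta=\sum_{i=1}^n v(g_i)-v(G^\ast),
\]
which is Scott's inequality.

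The main technical obstacle is the $G$-stability of $W$. Since $G$ is generated by the $g_k$, it suffices to check $g_k\cdot h_i(g_i-1)V\subseteq W$ for each generator $g_k$; the crucial algebraic input is the identity $(g_kg_j-1)V=(g_k-1)V+(g_j-1)V$, coming from $g_kg_j-1=(g_k-1)g_j+(g_j-1)$, which lets one reduce translates of the form $(g_kh_{i+1}-g_kh_i)V=g_kh_i(g_i-1)V$ to sums of terms already present in $W$ by exploiting the cyclic structure $h_1,\ldots,h_n,h_{n+1}=h_1$. Without this last identification one would only obtain the weaker bound $\sum_i v(g_i)\geq v(G)+\dim W$.
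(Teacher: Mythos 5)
First, a point of comparison: the paper itself gives no proof of this theorem; it quotes it from \cite{Scott}, whose original argument is cohomological. Your proposal is the standard ``linear-algebra'' proof (it computes, in effect, parabolic cocycles on the punctured sphere), and its skeleton is right: the complex $V\xrightarrow{\alpha}\bigoplus_i(g_i-1)V\xrightarrow{\beta}V$, the identifications $\ker\alpha=V^G$ and $\dim\bigoplus_i(g_i-1)V=\sum_i v(g_i)$, the duality $(V/IV)^*\cong(V^*)^G$, and the final rank--nullity step are all correct. Everything therefore hinges on the equality $\Im\beta=IV$, and exactly there your argument has a genuine gap, in two places. First, your proof of $G$-stability of $W=\sum_i h_i(g_i-1)V$ is circular: to show $g_kh_i(g_i-1)V\subseteq W$ you reduce, via $ab-1=(a-1)b+(b-1)$, to terms of the form $(h_j-1)V$ (these do lie in $W$, by telescoping) \emph{plus the term} $(g_k-1)V$; but $(g_k-1)V$ is not among the ``terms already present in $W$'' --- only its translate $h_k(g_k-1)V$ is --- and $(g_k-1)V\subseteq W$ is precisely what stability is meant to deliver. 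The ``cyclic structure'' does not rescue this: cyclically rotating the tuple replaces $W$ by the translate $g_1^{-1}W$, which is only known to equal $W$ a posteriori. Second, even granting stability, the inference ``$W$ is $G$-stable and $W\supseteq(g_1-1)V$, hence $W\supseteq IV$'' is invalid: for $G=\Z_2\times\Z_2=\langle a,b\rangle$ with $(g_1,g_2,g_3)=(a,b,ab)$ acting on its regular representation, $(g_1-1)V$ is already a $G$-submodule and is strictly smaller than $IV$. (The correct inference from stability would use all indices: $(g_i-1)V=h_i^{-1}\bigl(h_i(g_i-1)V\bigr)\subseteq h_i^{-1}W=W$.)

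The good news is that $\Im\beta=IV$ is true, and your own first lemma closes the gap once it is applied to the right generating set. Since $h_i^{-1}V=V$, one has
\begin{equation*}
h_i(g_i-1)V=h_i(g_i-1)h_i^{-1}V=(\tilde g_i-1)V,\qquad \tilde g_i:=h_ig_ih_i^{-1},
\end{equation*}
so $W=\sum_i(\tilde g_i-1)V$. The conjugates $\tilde g_1,\dotsc,\tilde g_n$ generate $G$: indeed $\tilde g_1=g_1$, and inductively, if $g_1,\dotsc,g_{i-1}$ lie in $\langle\tilde g_1,\dotsc,\tilde g_n\rangle$ then so does $h_i$, hence so does $g_i=h_i^{-1}\tilde g_ih_i$. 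Now your lemma ``$\sum_i(a_i-1)V=IV$ for any generating set $\{a_i\}$ of the finite group $G$'' --- proved exactly by your word-length induction, which, note, uses finiteness of the orders, so the argument covers the finite groups relevant to the paper --- applied to $\{\tilde g_i\}$ gives $W=IV$ at once, and $G$-stability of $W$ comes out as a consequence rather than being needed as an input. With this replacement (and with the harmless correction that $(ab-1)V=(a-1)V+(b-1)V$ should be an inclusion $\subseteq$, which is all the induction requires), your proof is complete and yields $\sum_i v(g_i)\geq v(G)+v(G^*)$ as stated.
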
 Since $v(g_i)$ only depends on the conjugacy class of
  $g_i$, and the order of the $g_i$ is irrelevant for the condition,
  we see that Scott's theorem determines a test to identify the
  $c\in \mathcal{C}_G^{\mathbf{m}}/{(\Sigma_r)_\mathbf{m}}$ which can
  potentially have a system of generators in their preimage. We will
  say that $c$ passes Scott's test over $\K$ if the condition of
  Theorem~\ref{thm:Scott} is satisfied for every finite-dimensional
  irreducible representation of $G$ over $\K$.

  For any field $\K$, we define
  \[\mathcal{C}_G^{\mm,\K}=\{\tilde c\in\mathcal{C}_G^\mm\mid \tilde c
    \text{ passes Frobenius' test and Scott's test over $\K$}\}.\] Our
  implementation runs the test on a field $\F_q$, with $q$ a fixed
  prime number greater than the group order. Since
  $\mathcal{C}_G^{\mm,\F_q}$ is invariant under
  $(\Sigma_r)_{\mathrm{m}}$ and $\Aut G$, this allows us to replace
  $\mathcal{C}_G^{\mm}$ with $\mathcal{C}_G^{\mm,\F_q}$ in \S
  \ref{say:determinesection}.  \medskip

  \begin{ex} In Example~\ref{example:Sigma4}, consider the
    two-dimensional representation of $\Sigma_4$ obtained by pulling
    back the two-dimensional irreducible representation of $\Sigma_3$
    (the morphism $\Sigma_4\to\Sigma_3$ being given by the quotient by
    the group generated by $C_2$). Then
    \[v(C_1)=1, v(C_2)=0, v(\Sigma_4)=2=v(\Sigma_4^*).\] Thus, Scott's
    test eliminates all $\tilde c$ of the form
    \eqref{eqn:tildecsigma4} such that $k < 4$.
  \end{ex}

  \begin{say} We also employed a technical optimization which deserves
    to be mentioned.

    The condition $\langle g_1,\dotsc, g_{r-1}\rangle=G$ only depends
    on the set $\{g_1,\dotsc, g_{r-1}\}$, call it the \emph{underlying
      set} of the $r$-ple $(g_1,\dotsc, g_r)$. Different elements in
    $G^{\mathbf m}$ can have the same underlying set; indeed, when
    $c,c'\in \mathcal{C}_G^{\mathbf{m}}/{(\Sigma_r)_\mathbf{m}}$
    contain the same conjugacy classes, possibly with different
    multiplicities, elements in $\pi^{-1}(c)$ and elements in
    $\pi^{-1}(c')$ can have the same underlying set.

    Thus, when iterating over the sets $p^{-1}(\tilde c)$ of potential
    spherical systems of generators, it makes sense to store in memory
    a list of underlying sets that are known to either generate $G$ or
    not, and look up each underlying set in the list before actually
    performing the (costly) test to see whether a given $r$-ple
    actually generates the group. Since these lists can become quite
    large, one can save memory by observing that an element of
    $C_1\times\dots \times C_r$ cannot have the same underlying set as
    an element of $C'_1\times\dots \times C'_r$ unless
    $\{C_1,\dotsc, C_{r-1}\}=\{C'_1,\dotsc, C'_{r-1}\}$.  Therefore,
    given a section $F$ of $\mathcal{C}_G^{\mm,\F_q}$, we work
    separately on each component $F_A$ of the partition
    \begin{equation*}
      \label{eqn:Fpartitioned} F=\bigsqcup_{A\subset \mathcal{C}_G} F_A,
      \quad F_A=\{(C_1,\dotsc, C_r)\in F\mid \{C_1,\dotsc, C_{r-1}\}=A\}.
    \end{equation*}    
    The partition is computed with Algorithm~\ref{algo:section}.
  \end{say}

  \begin{algorithm}[H] \SetKwInOut{Input}{input}
    \SetKwInOut{Output}{output} \SetKwComment{tcp}{/*}{*/} \Input{A
      group $G$, a signature $\mm$, a prime $q\geq \abs{G}$} \Output{A
      section $F$ of $\mathcal{C}_G^{\mm,\F_q}$, partitioned as in
      \eqref{eqn:Fpartitioned}} $F_A\leftarrow\emptyset$ for all
    $A\subset\mathcal{C}_G$\;
    $K\leftarrow \{(C_1,\dotsc, C_r)\in\mathcal{C}_G^r\mid
    \mbox{$C_1\leq\dots\leq C_r$} \text{ and some permutation }
    (C_{\sigma_1},\dotsc, C_{\sigma_r}) \text{ is in }
    \mathcal{C}^\mm_G\}$\;
    \tcp{$K=\mathcal{C}_G^\mm / (\Sigma_r)_\mm$, represented as in
      \eqref{pio})} $S\leftarrow$
    a section for the action of $\Aut G$ on $K$\\
    \For{$(C_1,\dotsc, C_r)$ in $S$} {\If{$(C_1,\dotsc, C_r)$ passes
        Frobenius' test and Scott's test over $\F_q$} {add the
        sequence $(C_1,\dotsc, C_r)$ to the set
        $F_{\{C_1,\dotsc, C_{r-1}\}}$} } \Return $F=\bigcup F_A$
    \caption{Computing the section}\label{algo:section}
  \end{algorithm}

  \begin{say} In theory, a section in
    $\xgm := \datar(G)\cap G^\mm \subset G^\mm $ can be obtained by
    computing $S'$ as in Proposition~\ref{prop:section}, then
    verifying for each element $(g_1,\dotsc, g_r)$ whether it is a
    spherical system of generators. A bit of experimenting shows that
    it is better to identify the subset of $p^{-1}(\tilde c)$
    consisting of generators, before applying the action of
    $\HH_{\tilde c}$ to extract a section.

    We also point out that in a spherical system of generators
    $(g_1,\dotsc, g_r)$, the last element is determined by the others;
    thus, given
    $\tilde c=(C_1,\dotsc, C_r)\in \mathcal{C}_G^{\mathbf m}$,
    spherical generators in $p^{-1}(\tilde c)$ can be determined by
    iterating in $C_1\times \dots \times C_{r-1}$, and testing for
    each element if the inverse of the product is in $C_r$.
  
    The number of iterations can therefore be reduced by choosing
    $\tilde c=(C_1,\dotsc, C_r)$ in such a way that the last conjugacy
    class is the biggest. Since we have $C_1\leq\dots\leq C_r$
    relative to the fixed ordering of $\mathcal{C}_G$, it suffices to
    choose the latter in such a way that conjugacy classes with more
    elements come after.
  
    This leads to Algorithm~\ref{algo:fixedGm}.

    \begin{algorithm}[th] \SetKwInOut{Input}{input}
      \SetKwInOut{Output}{output} \SetKw{Return}{return}
      \SetKwComment{tcp}{//}{} \Input{A group $G$ of order $d$; a
        signature $\mm\in\mathfrak{S}_{d,g_{\max}}$} \Output{One
        representative in each Hurwitz equivalence class of spherical
        systems of generators of $G$ with signature $\mm$}
      \caption{Classifying spherical systems of generators for fixed
        $G$, $\mm$ \label{algo:fixedGm}} $q\leftarrow$ the smallest
      prime number greater than $d$\; $F\leftarrow$ {a section of
        $\mathcal{C}_G^{\mm,\F_q}$, partitioned as in
        \eqref{eqn:Fpartitioned}}\; \For{$A\subset\mathcal{C}_G$}{
        generating $\leftarrow \{\}$\; notgenerating
        $\leftarrow \{\}$\;{ \For{$(C_1,\dotsc, C_{r})$ in $F_A$}{
            $X\leftarrow\{\}$\; \For{$(g_1,\dotsc, g_{r-1})$ in
              $C_1\times\dotsm \times C_{r-1}$}{
              $g_r\leftarrow (g_1\dotsm g_{r-1})^{-1}$\;
              \If{$g_r\in C_r$ and $\{g_1,\dotsc, g_{r-1}\}\notin$
                notgenerating}{ \eIf{$\{g_1,\dotsc, g_{r-1}\}\in$
                  generating or
                  $\langle g_1,\dotsc, g_{r-1}\rangle =G$}{ add
                  $\{g_1,\dotsc, g_{r-1}\}$ to generating\; add
                  $(g_1,\dotsc, g_r)$ to $X$\; }{ add
                  $\{g_1,\dotsc, g_{r-1}\}$ to notgenerating\; } } }
            \If{$X$ not empty}{ $\HH_{\tilde c}\leftarrow $ stabilizer
              of $(C_1,\dotsc, C_r)$ in $\HM$\; append to output a
              section of $X$ for the action of $\HH_{\tilde c}$ } } }
      }
    \end{algorithm}

  \end{say}

  \begin{say}
    \label{say:dihedral}
    For dihedral groups of order $4k+2$, our problem is the object of
    one of the main results in \cite{baffo-linceo}: Theorem 2 of that
    paper shows that given $\tilde c$ in $\mathcal{C}_G^{\mm}$ there
    is at most one orbit of systems of spherical generators in
    $p^{-1}(\tilde c)$. Indeed, if the order is of the form $4k+2$,
    the {\it numerical type} defined in \cite[Definition
    2]{baffo-linceo} corresponds exactly to the class of $c$ modulo
    the action of $\Aut G$.
  
    Therefore, instead of computing the whole set of systems of
    generators mapping to $\tilde c$ as in
    Algorithm~\ref{algo:fixedGm}, it is sufficient to iterate through
    $p^{-1}(\tilde c)$ and stop as soon as a system of generators is
    found.
  \end{say}

  \begin{say} For abelian groups $G$, conjugacy classes contain a
    single element and the map $p$ of Diagram~\ref{diag:Gr} is
    injective, with the action of $\braid$ reducing to an action of
    $\Sigma_r$. Therefore, having computed a section $S$ for the
    action of $\Aut G$ on $\mathcal{C}_G^{\mm}/(\Sigma_r)_\mm$ exactly
    as in the nonabelian case, one only needs to determine for each
    element of $S$ whether its preimage in $G^{\mm}$ is a spherical
    system of generators. The tests of Scott and Frobenius become
    redundant here, since for fixed elements $(g_1,\dotsc, g_r)$ it is
    more efficient to check the conditions $\prod g_i=1$ and
    $\langle g_1,\dotsc, g_r\rangle =G$ directly.
  \end{say}

  \begin{say}
    \label{say:results}
    Running our implementation \cite{gullinbursti} of the
    above-illustrated algorithms, we have been able to classify
    topological types of holomorphic actions on Riemann surfaces
    (equivalently of orientation-preserving actions on orientable
    topological surfaces) of genus $g\leq 39$ with genus 0 quotient,
    with only three exceptions:
    \begin{align*}
      G&=(\Z_3\times\Z_3)\rtimes\Z_2,  & {\mm}&= \{ 2, 2, 2, 2, 2, 2, 2, 2, 2, 2 \}, & g&=28;\\
      G&=(\Z_3\times\Z_3)\rtimes\Z_2,   & {\mm}&= \{ 2, 2, 2, 2, 2, 2, 2, 2, 2, 2, 3 \}, & g&=34;\\
      G&=(\Z_3\times\Z_3)\rtimes\Z_2,   & {\mm}&= \{  2, 2, 2, 2, 2, 2, 2, 2, 2, 2, 2, 2 \}, &  g&=37.
    \end{align*}
    In these cases, the number of spherical systems of generators is
    too large to fit into the memory of the computer at our disposal.

    The number of topological types by genus is summarized in
    Table~\ref{table:numberofssgbyg}. A strict inequality such as
    $>3580$ for $g=28$ refers to the fact that the program classifies
    $3580$ topological types with groups and signatures distinct from
    the offending group and signature, which gives rise to at least
    one more topological type.

    The three exceptions above, for which we were not able to finish
    the computation, all share the same group,
    i.e. $G=(\Z_3\times\Z_3)\rtimes\Z_2$. This is the semidirect
    product of $A:= \Z_3 \times \Z_3$ with $\Z_2$ defined by the
    morphism $[1]_2 \mapsto \phi \in \Aut A$, where $\phi(a) = -a$; it
    is denoted by \texttt{Smallgroup(18,4)} in the \magma database.
    This group belongs to the family of so-called \emph{generalized
      dihedral groups}, i.e. groups of the form $G= A \rtimes \Z_2$,
    with $A$ abelian and morphism $[1]_2\mapsto \phi$ as above.  When
    the order of $A$ is odd, these groups are naturally challenging
    for our algorithm, since they have a very big conjugacy class, the
    complement of the index $2$ subgroup $A$. For $A$ cyclic, i.e. for
    $G$ a ``standard'' dihedral group, we were able to avoid heavy
    computations by applying the results of \cite{baffo-linceo}, as
    explained in \S \ref{say:dihedral}. We suspect that a more
    complicated analysis could yield a similar result also for more
    general groups of the form $A\rtimes \Z_2$. Apart from the
    theoretical importance, this would allow to reach the
    classification of topological types up to 39 or 40 without need of
    more computations. More precisely the invariant in \cite{clp}
    might give results analogous to those in \cite{baffo-linceo} for a
    wider class of generalized dihedral groups. We hope to be able to
    give some results in this direction in a forthcoming paper.

  \end{say}

  \begin{table}
    \caption{\label{table:numberofssgbyg} Number of topological types
      of Galois covers of the line with genus $g$}
    \[
      \begin{array}{cc}
        g & \text{\# types}\\
        \hline
        2 & 19 \\
        3 & 46 \\
        4 & 65 \\
        5 & 92 \\
        6 & 95 \\
        7 & 160 \\
        8 & 129 \\
        9 & 343 \\
        10 & 289 \\
        11 & 342 \\
        12 & 317 \\
        13 & 741 \\
        14 & 323 
      \end{array}
      \hspace{1cm}
      \begin{array}{cc}
        g & \text{\#  types}\\
        \hline

        15 & 768 \\
        16 & 687 \\
        17 & 1473 \\
        18 & 711 \\
        19 & 1689 \\
        20 & 881 \\
        21 & 2790 \\
        22 & 1546 \\
        23 & 2178 \\
        24 & 1852 \\
        25 & 5955   \\
        26 & 1881 \\ 
        27 & 4351 
      \end{array}
      \hspace{1cm}
      \begin{array}{cc}
        g & \text{\#  types}\\
        \hline
        28 & > 3580 \\
        29 & 8169 \\
        30 & 3992 \\
        31 & 8506 \\
        32 & 4336 \\
        33 & 16007\\
        34 & > 6983 \\
        35 & 11827 \\
        36 & 8753 \\
        37 & > 26712 \\
        38 & 8486 \\
        39 & 19099 \\
        \hspace{0cm}
      \end{array}
    \]
  \end{table}

\end{document}